\newtheorem{thm}{Theorem}[section]
\newtheorem{cor}[thm]{Corollary}
\newtheorem{lem}[thm]{Lemma}
\newtheorem{prop}[thm]{Proposition}
\theoremstyle{definition}
\theoremstyle{remark}
\newtheorem{rem}[thm]{\bf Remark}
\numberwithin{equation}{section}
\begin{document}
\title[A note on separable functors and monads]
{A note on separable functors and monads}

\author[Xiao-Wu Chen] {Xiao-Wu Chen}

\thanks{The author is supported by National Natural Science Foundation of China (No. 11201446) and NCET-12-0507.}
\subjclass[2010]{16G70, 16G10, 13D07}
\date{\today}

\thanks{E-mail:
xwchen$\symbol{64}$mail.ustc.edu.cn}
\keywords{separable functor, separable monad, monadic adjoint pair, equivariant object}%

\maketitle

\dedicatory{}%
\commby{}%

\begin{abstract}
For an adjoint pair $(F, G)$ of functors, we prove that $G$ is a separable functor if and only if the defined monad
is separable  and the  associated  comparison functor is an equivalence up to retracts. In this case, under an idempotent
completeness condition,  the adjoint pair $(F, G)$ is monadic. This applies to the comparison between the
derived category of the category of equivariant objects in an abelian category and  the category
of equivariant objects in the derived category of the abelian category.
\end{abstract}

\section{Introduction}

Let $\mathcal{A}$ be a category and  $G$ be a finite group. By a strict action of $G$ on $\mathcal{A}$, we mean a group
homomorphism from $G$ to the automorphism group of $\mathcal{A}$. Then we form the category $\mathcal{A}^G$ of $G$-equivariant objects
in $\mathcal{A}$; compare \cite{RR, DGNO}.

We assume that $\mathcal{A}$ is an abelian category, and thus the category $\mathcal{A}^G$ is also abelian. Consider the bounded
derived category $\mathbf{D}^b(\mathcal{A})$. Then the $G$-action on $\mathcal{A}$ extends to $\mathbf{D}^b(\mathcal{A})$.
In general, the categories $\mathbf{D}^b(\mathcal{A}^G)$ and $\mathbf{D}^b(\mathcal{A})^G$ are not equivalent. However,
a nice observation in \cite[Lemma 1.1]{Po} claims that they are equivalent under a characteristic zero condition and
a hereditary condition on the abelian category $\mathcal{A}$. One might ask whether these conditions for this equivalence are
essential or not. It turns out that this equivalence holds in a great generality. Indeed, separable functors and
monads appear naturally in the construction of the category of equivariant objects.  All these motivates this note.

The note is organized as follows. In Section 2, we collect some basic facts on separable functors. In Section 3, we recall some
facts on separable monads and the construction of the associated comparison functor to an adjoint pair. Then  we prove
that in an adjoint pair $(F, G)$ of functors, the functor $G$ is separable if and only if the defined monad is separable
and the associated comparison functor is an equivalence up
to retracts; see Proposition \ref{prop:main}. In this case, if we assume an idempotent completeness condition,
then the adjoint pair $(F, G)$
is monadic; see Corollary \ref{cor:sepfm}. In Section 4, we apply these results to obtain two triangle equivalences; in particular,
we prove that if the order of the group $G$ is invertible in an abelian category $\mathcal{A}$, then there is a triangle equivalence
between  $\mathbf{D}^b(\mathcal{A}^G)$ and $\mathbf{D}^b(\mathcal{A})^G$; see Proposition \ref{prop:app2}.  Here, we mention
that there exists a canonical (pre-)triangulated structure on $\mathbf{D}^b(\mathcal{A})^G$ by applying the results in \cite{Bal}.

\section{Separable functors}

In this section, we recall  from \cite{NVO,Rae} some basic facts on separable functors.

Let $\mathcal{C}$ be a category. We will consider the Hom
 bifunctor ${\rm Hom}_\mathcal{C}(-, -)\colon \mathcal{C}^{\rm op}\times \mathcal{C}\rightarrow \mathbf{\rm Set}$;
 here, $\mathcal{C}^{\rm op}$ denotes the opposite category of $\mathcal{C}$, and $\mathbf{\rm Set}$ denotes the category of sets.

Let $F\colon \mathcal{C}\rightarrow \mathcal{D}$ be a functor. Then we have the bifunctor
 ${\rm Hom}_\mathcal{D}(F-, F-)\colon  \mathcal{C}^{\rm op}\times \mathcal{C}\rightarrow \mathbf{\rm Set}$; moreover,
 we have a natural transformation induced the action of $F$ on morphisms
\begin{align*}
F\colon {\rm Hom}_\mathcal{C}(-, -) \longrightarrow {\rm Hom}_\mathcal{D}(F-, F-).
\end{align*}

The functor $F\colon \mathcal{C}\rightarrow \mathcal{D}$ is \emph{separable} \cite{NVO} provided that the above
natural transformation $F$ admits a retraction $H$. In other words, for each pair of objects $X, Y$ in $\mathcal{C}$ there exists a map
$$H_{X, Y}\colon  {\rm Hom}_\mathcal{D}(F(X), F(Y))\longrightarrow {\rm Hom}_\mathcal{C}(X, Y)$$
satisfying that $H_{X, Y}(F(f))=f$ for any morphism $f\colon X\rightarrow Y$; moreover, $H$ is functorial in both $X$ and $Y$. It follows
that a separable functor is faithful. On the other hand, a fully faithful functor is separable.

\begin{lem}\label{lem:sepa}
Let $F\colon \mathcal{C}\rightarrow \mathcal{D}$ and $G\colon \mathcal{D}\rightarrow \mathcal{E}$ be two functors.  Then the following statements hold.
\begin{enumerate}
\item If both $F$ and $G$ are separable, then the composite $GF$ is separable.
\item If the composite $GF$ is separable, then $F$ is separable.
\item If $F'\colon \mathcal{C}\rightarrow \mathcal{D}$ is a separable functor and there exist natural
transformations $\phi\colon F'\rightarrow F$ and $\psi\colon F\rightarrow F'$ satisfying
$\psi \circ \phi={\rm Id}_{F'}$, then the functor $F$ is separable;
\item Assume that $\mathcal{C}=\mathcal{D}$ and that there exist natural transformations $\phi\colon{\rm Id}_\mathcal{C}\rightarrow F$
and $\psi\colon F \rightarrow {\rm Id}_\mathcal{C}$ satisfying $\psi\circ \phi={\rm Id}$. Then the functor $F$ is separable.
\end{enumerate}
\end{lem}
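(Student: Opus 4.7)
The overall plan is to prove each of the four parts by constructing the required retraction explicitly from the data at hand, and to obtain (4) as an immediate special case of (3). No single part should require more than elementary bookkeeping with natural transformations.

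For part (1), I will compose the two given retractions: if $H^F$ and $H^G$ witness separability of $F$ and $G$, then the candidate retraction for $GF$ sends a morphism in ${\rm Hom}_{\mathcal{E}}(GF(X), GF(Y))$ first through $H^G_{F(X), F(Y)}$ into ${\rm Hom}_{\mathcal{D}}(F(X), F(Y))$ and then through $H^F_{X,Y}$. For part (2), I will go the opposite way: from a retraction $H^{GF}$ for $GF$, I will post-compose the input with $G$ on morphisms to produce a retraction of $F$, setting $H^F_{X,Y}(g):=H^{GF}_{X,Y}(G(g))$ for $g \colon F(X) \to F(Y)$. In both parts, the retraction identity and naturality in $X$ and $Y$ are routine consequences of the input data together with functoriality of $F$ and $G$.

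For part (3), the key idea is to use $\phi$ and $\psi$ as a change of variables, turning morphisms between $F$-images into morphisms between $F'$-images. Given $g\colon F(X)\to F(Y)$, I will set $H^F_{X,Y}(g):=H^{F'}_{X,Y}(\psi_Y \circ g \circ \phi_X)$. The decisive verification is the computation on $g=F(f)$: applying naturality of $\psi$ to $f$ yields $\psi_Y\circ F(f) = F'(f)\circ \psi_X$, and then the identity $\psi\circ\phi=\operatorname{Id}_{F'}$ collapses $F'(f)\circ\psi_X\circ\phi_X$ to $F'(f)$, so $H^F_{X,Y}(F(f)) = H^{F'}_{X,Y}(F'(f))=f$. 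Naturality in $X,Y$ follows from naturality of $\phi,\psi$ combined with that of $H^{F'}$. Part (4) is then immediate by taking $F'=\operatorname{Id}_{\mathcal{C}}$, which is fully faithful and therefore separable by the remark preceding the lemma.

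The main obstacle is conceptual rather than computational: in (3) one must recognize that it is the naturality of $\psi$, and not of $\phi$, that is needed to rewrite $\psi_Y\circ F(f)\circ\phi_X$ as $F'(f)$, so that the retraction identity for $F'$ can be applied. Aside from keeping that direction straight, the whole argument consists of writing down the obvious candidate retractions and checking them against the section--retraction identities provided.
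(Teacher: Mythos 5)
Your proposal is correct and follows essentially the same route as the paper: parts (1) and (2) are the standard explicit constructions (which the paper delegates to a citation), and your retraction $H^{F}_{X,Y}(g)=H^{F'}_{X,Y}(\psi_Y\circ g\circ\phi_X)$ for (3) is exactly the paper's $H'\circ\Delta$, with (4) obtained as the same special case. The only quibble is your closing remark: the rewriting of $\psi_Y\circ F(f)\circ\phi_X$ as $F'(f)$ works equally well via naturality of $\phi$ (giving $\psi_Y\circ\phi_Y\circ F'(f)$), so nothing hinges on choosing $\psi$ over $\phi$ there.
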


\begin{proof}
We refer to \cite[Lemma 1.1]{NVO} for (1) and (2). The statement (4) is a special case of (3), since the identity functor is
always separable.

For (3), consider the natural transformation $$\Delta\colon {\rm Hom}_\mathcal{D}(F-, F-)\longrightarrow {\rm Hom}_\mathcal{D}(F'-, F'-)$$
given by $\Delta_{X, Y}(g)=\psi_Y\circ g\circ \phi_X$ for any morphism $g\colon F(X)\rightarrow F(Y)$. Then for any morphism
 $f\colon X\rightarrow Y$ in $\mathcal{C}$, we apply the identities $\psi_Y\circ F(f)=F'(f)\circ \psi_X$ and
 $\psi\circ \phi={\rm Id}_{F'}$ to deduce  $\Delta_{X,Y}(F(f))=F'(f)$; in other words, we have an identity $\Delta\circ F=F'$ of
  natural transformations. Since the natural transformation $F'$ has a retraction, so does $F$.
\end{proof}

Assume that $F\colon \mathcal{C}\rightarrow \mathcal{D}$ admits a right adjoint $G\colon \mathcal{D}\rightarrow \mathcal{C}$.
We denote by $\eta\colon {\rm Id}_\mathcal{C}\rightarrow GF$ the unit and $\epsilon\colon FG\rightarrow {\rm Id}_\mathcal{D}$ the
counit; they satisfy $\epsilon F\circ F\eta ={\rm Id}_F$ and $G\epsilon\circ \eta G={\rm Id}_G$. In what follows, by referring
to an adjoint pair $(F, G)$ we really mean the quadruple $(F, G; \eta, \epsilon)$.

The following result is due to \cite[Theorem 1.2]{Rae}. We make slight modification and include a short proof.

\begin{lem}\label{lem:adjoint}
Let $(F, G)$ be an adjoint pair as above. Then the following statements are equivalent:
\begin{enumerate}
\item the functor $G$ is separable;
\item there exists a natural transformation $\xi\colon {\rm Id}_\mathcal{D}\rightarrow FG$ satisfying $\epsilon\circ \xi={\rm Id}$;
\item there exist natural transformations $\phi\colon {\rm Id}_\mathcal{D} \rightarrow FG$ and $\psi\colon FG\rightarrow {\rm Id}_\mathcal{D}$
satisfying $\psi\circ \phi={\rm Id}$.
    \end{enumerate}
\end{lem}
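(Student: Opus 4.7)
The plan is to prove the cycle $(2) \Rightarrow (3) \Rightarrow (1) \Rightarrow (2)$. The first two implications are quick consequences of Lemma \ref{lem:sepa}, while the interesting content sits in $(1) \Rightarrow (2)$, which is where the retraction $H$ witnessing separability of $G$ has to be massaged into a natural transformation out of $\mathrm{Id}_\mathcal{D}$.

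For $(2) \Rightarrow (3)$, I would simply take $\phi = \xi$ and $\psi = \epsilon$; then $\psi \circ \phi = \epsilon \circ \xi = \mathrm{Id}$ by hypothesis, so (3) holds. For $(3) \Rightarrow (1)$, the natural transformations $\phi$ and $\psi$ exhibit $\mathrm{Id}_\mathcal{D}$ as a retract of $FG$ in the functor category, so Lemma \ref{lem:sepa}(4) applied to the endofunctor $FG\colon \mathcal{D} \to \mathcal{D}$ shows that $FG$ is separable. Then Lemma \ref{lem:sepa}(2), with the roles $F \rightsquigarrow G$ and $G \rightsquigarrow F$ in the composite, yields that $G$ is separable.

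The main direction is $(1) \Rightarrow (2)$. Let $H\colon \mathrm{Hom}_\mathcal{D}(G-, G-) \to \mathrm{Hom}_\mathcal{C}(-, -)$ be a retraction of the natural transformation induced by $G$; I should first remark that this is a natural transformation of bifunctors on $\mathcal{D}^{\mathrm{op}} \times \mathcal{D}$, so it is natural in both variables. For each object $D$ in $\mathcal{D}$, the unit provides a morphism $\eta_{G(D)} \colon G(D) \to GFG(D)$ in $\mathcal{C}$, which lives in $\mathrm{Hom}_\mathcal{C}(G(D), G(FG(D)))$. I would then define
\[
\xi_D \;=\; H_{D,\, FG(D)}\bigl(\eta_{G(D)}\bigr) \;\colon\; D \longrightarrow FG(D).
\]
To verify $\epsilon_D \circ \xi_D = \mathrm{Id}_D$, I would apply naturality of $H$ in the second variable with respect to $\epsilon_D \colon FG(D) \to D$: this yields $\epsilon_D \circ H_{D,FG(D)}(\eta_{G(D)}) = H_{D,D}\bigl(G(\epsilon_D) \circ \eta_{G(D)}\bigr)$. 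The triangle identity $G\epsilon \circ \eta G = \mathrm{Id}_G$ collapses the argument to $\mathrm{Id}_{G(D)} = G(\mathrm{Id}_D)$, and since $H$ is a retraction of the canonical map induced by $G$, we get $H_{D,D}(G(\mathrm{Id}_D)) = \mathrm{Id}_D$, as required.

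The only remaining obligation, and the step I expect to require the most careful bookkeeping, is the naturality of $\xi$. Given $f\colon D \to D'$ in $\mathcal{D}$, the equality $\xi_{D'} \circ f = FG(f) \circ \xi_D$ should follow by inserting $\eta_{G(D)}$ (respectively $\eta_{G(D')}$) and invoking naturality of $H$ in both variables together with naturality of $\eta$, so that both sides equal $H_{D, FG(D')}(\eta_{G(D')} \circ G(f)) = H_{D, FG(D')}(GFG(f) \circ \eta_{G(D)})$. This diagram chase is the main potential pitfall, but it is purely formal once one writes out the naturality squares of $H$ carefully.
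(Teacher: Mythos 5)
Your proof is correct, and its overall structure --- the cycle $(2)\Rightarrow(3)\Rightarrow(1)\Rightarrow(2)$, with the first two implications delegated to Lemma \ref{lem:sepa}(4) and (2) --- coincides with the paper's. The only divergence is in the presentation of $(1)\Rightarrow(2)$. The paper first transports the natural transformation $G\colon {\rm Hom}_\mathcal{D}(-,-)\to{\rm Hom}_\mathcal{C}(G-,G-)$ along the adjunction isomorphism ${\rm Hom}_\mathcal{C}(G-,G-)\cong{\rm Hom}_\mathcal{D}(FG-,-)$, so that the retraction becomes a natural transformation between representable-type functors, and then invokes the Yoneda lemma to conclude that it must be given by precomposition with a natural family $\xi_X\colon X\to FG(X)$; the identity $\epsilon\circ\xi={\rm Id}$ then drops out. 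You instead define $\xi_D=H_{D,FG(D)}(\eta_{G(D)})$ directly and verify both $\epsilon_D\circ\xi_D={\rm Id}_D$ and the naturality of $\xi$ by explicit chases using the binaturality of $H$, the naturality of $\eta$, and the triangle identity $G\epsilon\circ\eta G={\rm Id}_G$. These are really the same construction --- under the adjunction, $\eta_{G(D)}$ is the adjunct of ${\rm Id}_{FG(D)}$, i.e.\ exactly the Yoneda element the paper's argument produces --- so what your version buys is self-containedness (no appeal to Yoneda or to the identification of bifunctors), at the cost of having to carry out the naturality verification of $\xi$ by hand, which you do correctly.
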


\begin{proof}
The implication ``$(2)\Rightarrow (3)$" is trivial, and ``$(3)\Rightarrow (1)$" follows from Lemma \ref{lem:sepa}(4) and (2).

 It remains to prove ``$(1)\Rightarrow (2)$". For this, we identify the bifunctor ${\rm Hom}_\mathcal{C}(G-, G-)$ with
 ${\rm Hom}_\mathcal{D}(FG-, -)$ via the adjoint pair $(F, G)$. Then the natural transformation $G\colon {\rm Hom}_\mathcal{D}(-, -)
 \rightarrow {\rm Hom}_\mathcal{C}(G-, G-)$  is identified with the following natural transformation
$$G'\colon {\rm Hom}_\mathcal{D}(-, -) \longrightarrow {\rm Hom}_\mathcal{D}(FG-, -),
$$
where $G'_{X, Y}(f)=f\circ \epsilon_X$ for any morphism $f\colon X\rightarrow Y$. Then $G$ admits a retraction
if and only if so does $G'$. By Yoneda Lemma for contravariant functors,  a natural transformation
$H'_{X, Y}\colon {\rm Hom}_\mathcal{D}(FG(X), Y)\rightarrow {\rm Hom}_\mathcal{D}(X, Y)$ is uniquely induced by
 morphisms $\xi_X\colon X\rightarrow FG(X)$, that is, it sends $g$ to $g\circ \xi_X$; moreover, $\xi$ is
 natural in $X$. Then $H'\circ G'={\rm Id}$ implies that $\epsilon \circ \xi={\rm Id}$.
\end{proof}

We mention that Lemma \ref{lem:adjoint}(3) implies that the setting of \cite[3.2]{RR} really deals with an adjoint pair
consisting of two separable functors with extra properties.

\section{Separable monads}

In this section, we recall basic facts on monads and modules. We characterize separable functors using
 separable monads and the associated comparison functor.

Let $\mathcal{C}$ be a category. Recall from  \cite[Chapter VI]{McL} that a \emph{monad} on  $\mathcal{C}$
is a triple $(M, \eta, \mu)$ consisting of an endofunctor $M\colon \mathcal{C}\rightarrow \mathcal{C}$ and two natural transformations, the \emph{unit} $\eta\colon{\rm Id}_\mathcal{C}\rightarrow M$  and the \emph{multiplication} $\mu \colon M^2\rightarrow M$, subject to the relations $\mu \circ M\mu =\mu \circ \mu M$
and $\mu\circ M\eta={\rm Id}_M=\mu\circ \eta M$. We sometimes denote the monad by $M$ when $\eta$ and $\mu$ are understood.

A monad $(M, \eta, \mu)$ is \emph{separable} provided that there exists a natural
transformation $\sigma\colon M\rightarrow M^2$ satisfying that $\mu\circ \sigma={\rm Id}_M$ and
$M\mu\circ \sigma M=\sigma \circ \mu=\mu M\circ M\sigma$; see \cite[Section 6]{BV}.

One associates to each adjoint pair $(F, G; \eta, \epsilon)$ on two
categories $\mathcal{C}$ and $\mathcal{D}$ a monad $(GF, \eta, \mu)$ on $\mathcal{C}$,
where $\mu=G\epsilon F\colon M^2=GFGF\rightarrow G{\rm Id}_\mathcal{D}F=M$. The monad
$(GF, \eta, \mu)$ is said to be \emph{defined} by the adjoint pair $(F, G)$

We observe the following fact, which relates separable functors to separable monads.

\begin{lem}\label{lem:sepmon}
Consider the adjoint pair $(F, G; \eta, \epsilon)$. If the functor $G$ is separable, then the defined
monad $(GF, \eta, \mu)$ is separable.
\end{lem}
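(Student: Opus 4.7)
The plan is to produce the required natural transformation $\sigma \colon M \to M^2$ from the natural transformation $\xi$ supplied by Lemma \ref{lem:adjoint}, and then to check the three monad-separability identities by unwinding whiskerings and applying the naturality of $\xi$ and $\epsilon$.

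First, since $G$ is separable, Lemma \ref{lem:adjoint} gives a natural transformation $\xi\colon \mathrm{Id}_\mathcal{D}\To FG$ with $\epsilon\circ\xi=\mathrm{Id}$. I would set
\[
\sigma \;=\; G\xi F \colon GF \To GFGF, \qquad \text{i.e.,}\quad \sigma_X=G(\xi_{F(X)}).
\]
This is the only reasonable candidate because $\mu=G\epsilon F$, so reversing the direction by replacing $\epsilon$ with its section $\xi$ is the natural guess.

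The identity $\mu\circ\sigma=\mathrm{Id}_M$ follows immediately: $\mu\circ\sigma=G\epsilon F\circ G\xi F=G(\epsilon\circ\xi)F=G\,\mathrm{Id}_\mathcal{D}\,F=\mathrm{Id}_{GF}$. For the remaining two equalities $\sigma\circ\mu=M\mu\circ\sigma M=\mu M\circ M\sigma$, I would compute components and reduce everything to applications of $G$ on morphisms in $\mathcal{D}$. Explicitly, at an object $X$ of $\mathcal{C}$ one finds
\[
(\sigma\circ\mu)_X=G\bigl(\xi_{F(X)}\circ\epsilon_{F(X)}\bigr).
\]
For $M\mu\circ\sigma M$, its component at $X$ is $GFG(\epsilon_{F(X)})\circ G(\xi_{FGF(X)})=G\bigl(FG(\epsilon_{F(X)})\circ\xi_{FGF(X)}\bigr)$, and the naturality of $\xi\colon \mathrm{Id}_\mathcal{D}\To FG$ applied to $\epsilon_{F(X)}$ rewrites the inner composite as $\xi_{F(X)}\circ\epsilon_{F(X)}$. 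Dually, the component of $\mu M\circ M\sigma$ at $X$ is $G\bigl(\epsilon_{FGF(X)}\circ FG(\xi_{F(X)})\bigr)$, and the naturality of $\epsilon\colon FG\To\mathrm{Id}_\mathcal{D}$ applied to $\xi_{F(X)}$ again produces $\xi_{F(X)}\circ\epsilon_{F(X)}$. So all three natural transformations agree.

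I do not expect any serious obstacle here; the only thing to be careful about is bookkeeping with the whiskerings $M\sigma$, $\sigma M$, $M\mu$, $\mu M$ on the triple composite $M^3=GFGFGF$, and choosing the correct naturality square (naturality of $\xi$ for one identity, naturality of $\epsilon$ for the other). Once $\sigma=G\xi F$ is written down, each of the three required identities reduces to a single application of these tools.
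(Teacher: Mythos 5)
Your proposal is correct and follows essentially the same route as the paper: the paper also sets $\sigma=G\xi F$ and reduces the remaining identities to the equalities $FG\epsilon\circ\xi FG=\xi\circ\epsilon=\epsilon FG\circ FG\xi$, which are exactly the naturality squares for $\xi$ and $\epsilon$ that you invoke componentwise.
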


\begin{proof}
Since $G$ is separable, by Lemma \ref{lem:adjoint}(2) there exists a natural transformation $\xi\colon {\rm Id}_\mathcal{D}\rightarrow FG$ with the property $\epsilon \circ \xi={\rm Id}$. Set $\sigma=G\xi F\colon M=G{\rm Id}_\mathcal{D}F\rightarrow GFGF=M^2$. Then we have $\mu\circ \sigma={\rm Id}_M$.
The remaining identity follows from the identity $FG\epsilon \circ \xi FG=\xi\circ \epsilon=\epsilon FG\circ FG\xi$,
while these two equalities follow from the naturalness of $\xi$ and $\epsilon$, respectively.
\end{proof}

 In what follows, we take the notation from \cite{Bal}. For a monad $M$, an $M$-\emph{module} is a
 pair $(X, \lambda)$ consisting of an object $X$ in $\mathcal{C}$ and a morphism $\lambda \colon M(X)\rightarrow X$
 subject to the conditions $\lambda\circ M\lambda =\lambda\circ \mu_X$ and $\lambda\circ \eta_X={\rm Id}_X$;
  the object $X$ is said to be the underlying object of the module. A morphism  $f\colon (X, \lambda)\rightarrow (X', \lambda')$
  of two $M$-modules is a morphism $f\colon  X\rightarrow X'$ in $\mathcal{C}$ satisfying $f\circ \lambda=\lambda'\circ M(f)$.
  This gives rise to the category  $M\mbox{-Mod}_\mathcal{C}$ of $M$-modules. For each object $X$ in $\mathcal{C}$, we have
   the corresponding $M$-module $(M(X), \mu_X)$, the \emph{free module}.

For each monad $(M, \eta, \mu)$ on $\mathcal{C}$, there is a classical construction of an adjoint pair that
defines the given monad; see \cite[IV.2]{McL}. Consider the functor $F_M\colon \mathcal{C}\rightarrow M\mbox{-Mod}_\mathcal{C}$
sending $X$ to the free module $(M(X), \mu_X)$, and a morphism $f\colon X\rightarrow Y$ to a morphism
$M(f)\colon (M(X), \mu_X)\rightarrow (M(Y), \mu_Y)$. Denote by $G_M\colon M\mbox{-Mod}_\mathcal{C}\rightarrow \mathcal{C}$
the forgetful functor. Then we have the adjoint pair $(F_M, G_M; \eta, \epsilon_M)$, where for an
$M$-module $(X, \lambda)$, $(\epsilon_M)_{(X, \lambda)}=\lambda$. Here, we use $M=G_M F_M$.
This adjoint pair $(F_M, G_M; \eta, \epsilon_M)$ defines the given monad $M$.

The following result is due to \cite[2.9(1)]{BBW}; compare \cite[Proposition 6.3]{BV}.

\begin{lem}\label{lem:sepmon2}
Let $M$ be a monad on $\mathcal{C}$, and let $(F_M, G_M)$ be the adjoint pair as above. Then $G_M$ is a separable functor if and only if $M$ is a separable monad.
\end{lem}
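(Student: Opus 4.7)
The forward implication is a direct application of Lemma \ref{lem:sepmon} to the canonical adjoint pair $(F_M, G_M; \eta, \epsilon_M)$, since the monad defined by this adjunction is precisely $(M,\eta,\mu)$. Hence if $G_M$ is separable, then $M$ is a separable monad.

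For the converse, the plan is to verify criterion (2) of Lemma \ref{lem:adjoint} applied to the same adjoint pair. Starting from a separability structure $\sigma\colon M \to M^2$ on the monad, I must construct a natural transformation $\xi\colon \mathrm{Id}_{M\mbox{-Mod}_\mathcal{C}} \to F_M G_M$ with $\epsilon_M\circ \xi = \mathrm{Id}$. Concretely, at an $M$-module $(Y,\lambda)$ I need a morphism in $M\mbox{-Mod}_\mathcal{C}$ from $(Y,\lambda)$ to $(M(Y),\mu_Y)$ splitting $\lambda$. Since $\lambda\colon (M(Y),\mu_Y) \to (Y,\lambda)$ is itself a morphism of $M$-modules that is split by $\eta_Y$ in $\mathcal{C}$, naturality of the desired $\xi$ forces the formula
\[
\xi_{(Y,\lambda)}\;=\;M(\lambda)\circ \sigma_Y\circ \eta_Y\colon Y \longrightarrow M(Y).
\]

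Three properties of this formula then need to be verified. First, $\epsilon_M\circ \xi = \mathrm{Id}$, i.e.\ $\lambda\circ M(\lambda)\circ \sigma_Y\circ \eta_Y = \mathrm{Id}_Y$: rewrite $\lambda\circ M(\lambda) = \lambda\circ \mu_Y$ using the module axiom, then apply $\mu\circ \sigma = \mathrm{Id}_M$ and finally $\lambda\circ \eta_Y = \mathrm{Id}_Y$. Second, naturality of $\xi$ in $(Y,\lambda)$: a routine chase using naturality of $\eta$ and $\sigma$ together with the defining property of a module morphism. The third and most delicate step, which is the main obstacle, is to verify that $\xi_{(Y,\lambda)}$ is itself a morphism of $M$-modules, i.e.\ $\xi_{(Y,\lambda)}\circ \lambda = \mu_Y\circ M(\xi_{(Y,\lambda)})$. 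The plan is to reduce both sides to $M(\lambda)\circ \sigma_Y$: on the left, push $\eta$ past $\lambda$ (naturality of $\eta$), push $\sigma$ past $M(\lambda)$ (naturality of $\sigma$), apply the module axiom once more, and then invoke $M\mu\circ \sigma M = \sigma\circ \mu$ together with $\mu\circ \eta M = \mathrm{Id}_M$; on the right, push $\mu$ past $M^2(\lambda)$ (naturality of $\mu$) and use $\mu M\circ M\sigma = \sigma\circ \mu$ with $\mu\circ M\eta = \mathrm{Id}_M$. Both reductions collapse to $M(\lambda)\circ \sigma_Y$, yielding the required equality. Once this is in hand, Lemma \ref{lem:adjoint} concludes that $G_M$ is separable.
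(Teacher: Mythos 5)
Your proposal is correct and follows essentially the same route as the paper: the forward direction via Lemma \ref{lem:sepmon}, and the converse by constructing the very same section $\xi_{(Y,\lambda)}=M(\lambda)\circ\sigma_Y\circ\eta_Y$ of the counit and invoking Lemma \ref{lem:adjoint}(2). The only difference is that you spell out the verifications (that $\xi_{(Y,\lambda)}$ is a module morphism, natural, and splits $\epsilon_M$) which the paper leaves implicit, and these checks are carried out correctly.
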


\begin{proof}
The ``only if" part follows from Lemma \ref{lem:sepmon}, since $(F_M, G_M)$ defines $M$. For the ``if" part, assume that there is a natural transformation $\sigma\colon M\rightarrow M^2$ subject to the required conditions. We define a natural transformation $\xi\colon {\rm Id}_{M\mbox{-}{\rm Mod}_\mathcal{C}}\rightarrow F_M G_M$ as follows: for any $M$-module $(X, \lambda)$, $\xi_{(X, \lambda)}=M(\lambda)\circ \sigma_X\circ \eta_X$; here, we use that $F_M G_M(X, \lambda)=(M(X), \mu_X)$. Then $\epsilon_M\circ \xi={\rm Id}$. By Lemma \ref{lem:adjoint}, the functor $G_M$ is separable.
\end{proof}

The above adjoint pair $(F_M, G_M; \eta, \epsilon_M)$ enjoys a universal property: for any adjoint pair $(F, G; \eta, \epsilon)$ on $\mathcal{C}$ and $\mathcal{D}$ that defines $M$, there is a unique functor $K\colon \mathcal{D}\rightarrow M\mbox{-Mod}_\mathcal{C}$ satisfying $KF=F_M$ and $G_MK=G$; see \cite[IV.3]{McL}. This unique functor $K$ will be referred as the \emph{comparison functor} associated to the  adjoint pair $(F, G; \eta, \epsilon)$;  it is given by $K(D)=(G(D), G\epsilon_D)$ for any object $D$ and $K(f)=G(f)$ for any morphism $f$.

Following \cite[IV.3]{McL} the adjoint pair $(F, G)$ is \emph{monadic} (resp. \emph{strictly monadic}) if the associated
comparison functor $K\colon \mathcal{D}\rightarrow M\mbox{-Mod}$ is an equivalence (resp. an isomorphism). In these cases, we might identify $\mathcal{D}$ with $M\mbox{-Mod}_\mathcal{C}$.

The following fact is well known, which is implicit in \cite[IV.3 and IV.5]{McL}.

\begin{lem}\label{lem:ff}
Consider the comparison functor $K\colon \mathcal{D}\rightarrow M\mbox{-{\rm Mod}}_\mathcal{C}$ associated to the adjoint pair $(F, G)$. Then $K$ is fully faithful on ${\rm Im}\; F$.
\end{lem}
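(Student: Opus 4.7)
The plan is to reduce both Hom-sets in question to the same set $\mathrm{Hom}_\mathcal{C}(X, M(Y))$ via adjunction, and then check that $K$ induces the identity through these identifications.

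First I would unpack the definitions to see that $K$ sends the image of $F$ to the image of $F_M$. By definition of $K$, we have $K(F(X)) = (GF(X), G\epsilon_{F(X)}) = (M(X), \mu_X)$ since $\mu = G\epsilon F$; this is exactly $F_M(X)$. So for $X, Y \in \mathcal{C}$, the functor $K$ yields a map
\begin{equation*}
K_{X,Y}\colon \mathrm{Hom}_\mathcal{D}(F(X), F(Y)) \longrightarrow \mathrm{Hom}_{M\text{-}\mathrm{Mod}_\mathcal{C}}(F_M(X), F_M(Y)),
\end{equation*}
and the goal is to prove that $K_{X,Y}$ is bijective.

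Next, I would invoke the two adjunctions $(F, G; \eta, \epsilon)$ and $(F_M, G_M; \eta, \epsilon_M)$. Both share the same unit $\eta$, since the monad defined by either pair is $M = GF = G_M F_M$ with the same unit. The first adjunction gives a bijection $\mathrm{Hom}_\mathcal{D}(F(X), F(Y)) \cong \mathrm{Hom}_\mathcal{C}(X, M(Y))$ sending $f$ to $G(f)\circ \eta_X$; the second gives a bijection $\mathrm{Hom}_{M\text{-}\mathrm{Mod}_\mathcal{C}}(F_M(X), F_M(Y)) \cong \mathrm{Hom}_\mathcal{C}(X, M(Y))$ sending $g$ to $G_M(g)\circ \eta_X$. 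Using the identity $G_M K = G$, for any $f\colon F(X)\to F(Y)$ we have $G_M(K(f))\circ \eta_X = G(f)\circ \eta_X$, so the triangle of the three maps commutes. Hence $K_{X,Y}$ is the composite of two bijections, and thus itself a bijection.

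I do not anticipate a serious obstacle: the content is purely formal, amounting to the observation that the two adjunction isomorphisms agree via $G_M K = G$. The only place where one needs to be careful is in verifying that the units of the two adjunctions genuinely coincide and in checking the identity $K(F(X)) = F_M(X)$, both of which follow directly from the construction of $K$ and of $(F_M, G_M)$ recalled before the statement.
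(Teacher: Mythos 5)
Your proposal is correct and follows essentially the same route as the paper's proof: identify $K F = F_M$, transport both Hom-sets to $\mathrm{Hom}_\mathcal{C}(X, M(Y))$ via the two adjunctions, and use $G_M K = G$ to see that the induced map is the identity. You merely spell out the adjunction bijections ($f \mapsto G(f)\circ\eta_X$, etc.) more explicitly than the paper does.
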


Here, for any functor $F\colon \mathcal{C}\rightarrow \mathcal{D}$ we denote by ${\rm Im}\; F$ the \emph{image} of $F$, that is, the full subcategory of $\mathcal{D}$ consisting of objects of the form $F(X)$ for objects $X$ in $\mathcal{C}$.

\begin{proof}
Recall that $KF=F_M$. Then for any objects $X$ and $Y$ in $\mathcal{C}$, $K$ induces a map ${\rm Hom}_\mathcal{D}(F(X), F(Y))\rightarrow {\rm Hom}_{M\mbox{-}{\rm Mod}_\mathcal{C}}(F_M(X), F_M(Y))$. But by the adjunctions, both the Hom sets are identified to ${\rm Hom}_\mathcal{C}(X, M(Y))$. Using these identifications, the induced map becomes the identity; here, we use that $G_MK=G$. In other words, the induced map is bijective. Then we are done.
\end{proof}

The final ingredient we need is the idempotent completion of a category. Let  $\mathcal{C}$ be a category. An idempotent morphism $e\colon X\rightarrow X$ \emph{splits} if there exist two morphisms $u\colon X\rightarrow Y$ and $v\colon Y\rightarrow X$ satisfying $e=v\circ u$ and ${\rm Id}_Y=u\circ v$; in this case, $Y$ is said to be a \emph{retract} of $X$. If all idempotents split, the category $\mathcal{C}$ is said to be \emph{idempotent complete}.

There is a well-known construction of the \emph{idempotent completion} $\mathcal{C}^\natural$ of
a category $\mathcal{C}$. The category $\mathcal{C}^\natural$ is defined as follows: the objects
are pairs $(X, e)$, where $X$ is an object in $\mathcal{C}$ and $e\colon X\rightarrow X$ is an idempotent;
a morphism $f\colon (X, e)\rightarrow (X', e')$ is a morphism $f\colon X\rightarrow X'$ in $\mathcal{C}$
satisfying $f=e'\circ f\circ e$. The canonical functor $\iota_\mathcal{C}\colon \mathcal{C}\rightarrow \mathcal{C}^\natural$,
sending $X$ to $(X, {\rm Id}_X)$, is fully faithful; it is an equivalence if and only if $\mathcal{C}$ is idempotent complete.

Any functor $F\colon \mathcal{C}\rightarrow \mathcal{D}$ extends to a functor $F^\natural\colon \mathcal{C}^\natural\rightarrow \mathcal{D}^\natural$ by means of $F^\natural (X, e)=(F(X), F(e))$ and $F^\natural (f)=F(f)$. We have $\iota_\mathcal{D} F=F^\natural \iota_\mathcal{C}$. The functor $F\colon \mathcal{C}\rightarrow \mathcal{D}$ is called an \emph{equivalence up to retracts} provided that $F^\natural$ is an equivalence.

The following facts are direct to verify.

\begin{lem}\label{lem:idem}
Let $F\colon \mathcal{C}\rightarrow \mathcal{D}$ be a functor, and let $\mathcal{C}'\subseteq \mathcal{C}$ be
a full subcategory such that each object of $\mathcal{C}$ is a retract of some object in $\mathcal{C}'$.
Then the following two statements hold:
\begin{enumerate}
\item the functor $F$ is fully faithful if and only if so is its restriction to $\mathcal{C}'$;
  \item the functor $F$ is an equivalence up to retracts if and only if $F$ is fully faithful and each
  object $Y$ in $\mathcal{D}$ is a retract of an object in ${\rm Im}\; F$;
  \item if $\mathcal{C}$ is idempotent complete, then $F$ is an equivalence if and only if it is an equivalence up to retracts. \hfill $\square$
      \end{enumerate}
\end{lem}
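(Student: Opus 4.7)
\textbf{Proof plan for Lemma \ref{lem:idem}.}

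For (1), the key input is that each $X, Y \in \mathcal{C}$ sits as a retract of some $X', Y' \in \mathcal{C}'$ via $i_X\colon X\to X'$, $p_X\colon X'\to X$ with $p_X\circ i_X = {\rm Id}_X$, and similarly for $Y$. The maps $f\mapsto i_Y\circ f\circ p_X$ and $g\mapsto p_Y\circ g\circ i_X$ exhibit ${\rm Hom}_\mathcal{C}(X,Y)$ as a retract of ${\rm Hom}_\mathcal{C}(X',Y')$, and their $F$-images play the analogous role in $\mathcal{D}$. Naturality of $F$ in both variables yields a commutative diagram of retracts, which transfers injectivity and surjectivity of $F$ on hom-sets from $(X',Y')$ down to $(X,Y)$. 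The converse direction is immediate since $\mathcal{C}'$ is a full subcategory.

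For (2), note that every $(X,e) \in \mathcal{C}^\natural$ is a retract of $\iota_\mathcal{C}(X) = (X, {\rm Id}_X)$ through the two $\mathcal{C}^\natural$-morphisms both represented by $e$. Hence, applying (1) to $F^\natural$ with $\mathcal{C}' = \iota_\mathcal{C}(\mathcal{C})$, the functor $F^\natural$ is fully faithful iff its restriction to $\iota_\mathcal{C}(\mathcal{C})$ is fully faithful; combined with $\iota_\mathcal{D} F = F^\natural \iota_\mathcal{C}$ and the full faithfulness of $\iota_\mathcal{D}$, this is equivalent to $F$ being fully faithful. For essential surjectivity: if $F$ is fully faithful and $Y$ is a retract of some $F(X)$ via $i\colon Y\to F(X)$, $p\colon F(X)\to Y$, then $e:=i\circ p$ is idempotent on $F(X)$, hence $e = F(e')$ for a unique idempotent $e'\colon X\to X$, and $Y \cong (F(X), e) = F^\natural(X, e')$ in $\mathcal{D}^\natural$. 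A general object $(Y,e) \in \mathcal{D}^\natural$ is then a retract of $\iota_\mathcal{D}(Y)$, which is a retract of some $F^\natural(X, {\rm Id}_X)$, so the same argument applies. The converse is immediate from the definition of $F^\natural$.

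For (3), if $\mathcal{C}$ is idempotent complete then $\iota_\mathcal{C}$ is an equivalence. Given the identity $\iota_\mathcal{D}\circ F = F^\natural \circ \iota_\mathcal{C}$, if $F$ is an equivalence then $\mathcal{D}$ inherits idempotent completeness (equivalences transport this property), so $\iota_\mathcal{D}$ is an equivalence and hence so is $F^\natural$. Conversely, if $F^\natural$ is an equivalence, then $\iota_\mathcal{D} F = F^\natural\iota_\mathcal{C}$ is essentially surjective, forcing $\iota_\mathcal{D}$ to be essentially surjective; this means every idempotent in $\mathcal{D}$ splits, so $\iota_\mathcal{D}$ is an equivalence, and the same identity then yields $F$ as an equivalence.

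The hardest part is the bookkeeping in (2): one has to carry out a double retract (from $(Y,e)$ down to $Y$, then from $Y$ down to some $F(X)$) while using full faithfulness to lift idempotents from $\mathcal{D}$ back to $\mathcal{C}$. Once these are set up, the verifications are indeed direct, consistent with the author's remark.
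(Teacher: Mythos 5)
Your argument is correct, and each of the three verifications (the retract-of-hom-sets argument for (1), the idempotent-lifting argument for (2), and the transfer of idempotent completeness along $\iota_\mathcal{C}$, $\iota_\mathcal{D}$ for (3)) is the standard direct verification. The paper itself offers no proof of this lemma, stating only that the facts are ``direct to verify,'' so your write-up simply supplies the intended routine details.
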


The main result  of this section is as follows, where separable functors are characterized using separable
 monads and the associated comparison functor. It slightly extends  Lemma \ref{lem:sepmon2}. We mention that the
 result, at least in the triangulated case, is implicit in \cite[Theorem 5.17(d)]{Bal}.

\begin{prop}\label{prop:main}
Let $(F, G)$ be an adjoint pair on categories $\mathcal{C}$ and $\mathcal{D}$. Consider the defined monad $M=GF$ on $\mathcal{C}$ and the associated comparison functor $K\colon \mathcal{D}\rightarrow M\mbox{-{\rm Mod}}_\mathcal{C}$. Then the functor $G$ is separable if and only if $M$ is a separable monad and $K\colon \mathcal{D}\rightarrow M\mbox{-{\rm Mod}}_\mathcal{C}$ is an equivalence up to retracts.
\end{prop}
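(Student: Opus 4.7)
My plan is to combine the lemmas already established in Sections 2 and 3; no new machinery should be needed. The key tool will be Lemma \ref{lem:adjoint}, which I will invoke in parallel for the two adjunctions $(F,G)$ and the Kleisli-type $(F_M,G_M)$, tying them together by the relations $KF=F_M$ and $G_MK=G$.

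For the ``only if'' direction, suppose $G$ is separable. Lemma \ref{lem:sepmon} immediately gives that $M$ is a separable monad, so it only remains to show that $K$ is an equivalence up to retracts. By Lemma \ref{lem:idem}(2), this amounts to verifying that $K$ is fully faithful and that every $M$-module is a retract of some object in ${\rm Im}\; K$. Lemma \ref{lem:ff} gives full faithfulness of $K$ on ${\rm Im}\; F$; meanwhile, separability of $G$ via Lemma \ref{lem:adjoint}(2) produces $\xi\colon{\rm Id}_\mathcal{D}\to FG$ satisfying $\epsilon\circ\xi={\rm Id}$, which displays every $D\in\mathcal{D}$ as a retract of $FG(D)\in{\rm Im}\; F$. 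Lemma \ref{lem:idem}(1) then upgrades partial full faithfulness to full faithfulness on all of $\mathcal{D}$. For the retract condition in $M\mbox{-Mod}_\mathcal{C}$, I will apply Lemma \ref{lem:sepmon2} to conclude that $G_M$ is also separable, and invoke Lemma \ref{lem:adjoint}(2) a second time to obtain $\xi'\colon{\rm Id}\to F_MG_M$ with $\epsilon_M\circ\xi'={\rm Id}$, so that each $M$-module $(X,\lambda)$ is a retract of $F_MG_M(X,\lambda)=F_M(X)=KF(X)\in{\rm Im}\; K$, the crucial last containment coming from $KF=F_M$.

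For the ``if'' direction, assume $M$ is separable and $K$ is an equivalence up to retracts. Lemma \ref{lem:idem}(2) forces $K$ to be fully faithful, hence separable (fully faithful functors are separable, as noted after the definition in Section 2). Lemma \ref{lem:sepmon2} makes $G_M$ separable, and then $G=G_MK$ is separable by Lemma \ref{lem:sepa}(1). The main obstacle, such as it is, lies entirely in the ``only if'' direction: one must run Lemma \ref{lem:adjoint} twice, once on each side of the comparison, and observe that the retract generation by free modules in $M\mbox{-Mod}_\mathcal{C}$ can be repackaged as retract generation by ${\rm Im}\; K$ because $KF=F_M$. This is the small bookkeeping step that makes Lemma \ref{lem:idem}(2) apply, and once it is in place the proof is essentially the assembly described above.
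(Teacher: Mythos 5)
Your proposal is correct and follows essentially the same route as the paper's own proof: Lemma \ref{lem:sepmon} for separability of $M$, Lemma \ref{lem:ff} plus Lemma \ref{lem:idem}(1) for full faithfulness of $K$, the separability of $G_M$ (via Lemma \ref{lem:sepmon2}) to realize every $M$-module as a retract of a free module in ${\rm Im}\, F_M\subseteq {\rm Im}\, K$, and for the converse the factorization $G=G_MK$ with Lemma \ref{lem:sepa}(1). No substantive differences.
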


\begin{proof}
For the ``only if" part, we know already by Lemma \ref{lem:sepmon} that $M$ is a separable monad; moreover, then by Lemma \ref{lem:sepmon2} the functor $G_M$ is separable.

 Since $G$ is separable, there exists $\xi\colon {\rm Id}_\mathcal{D}\rightarrow FG$ such that $\epsilon\circ\xi={\rm Id}$. Then  any object $X$ in $\mathcal{D}$ is a retract of $FG(X)$, in particular,  an object from ${\rm Im}\; F$. By Lemma \ref{lem:ff} the restriction of $K$ to ${\rm Im}\; F$ is fully faithful. Then Lemma \ref{lem:idem}(1) implies that $K$ is fully faithful. Similarly, the separability of $G_M$ implies that each $M$-module is a retract of a module in ${\rm Im}\; F_M\subseteq {\rm Im}\; K$. Then Lemma \ref{lem:idem}(2) implies that $K$ is an equivalence up to retracts.

 For the ``if" part, we observe that $G_M$ is separable by Lemma \ref{lem:sepmon2}, and $K$ is separable, since it is fully faithful. Hence, by Lemma \ref{lem:sepa}(1) the composite $G=G_MK$ is separable.
\end{proof}

We observe the following immediate consequence of Proposition \ref{prop:main} and Lemma \ref{lem:idem}(3). In particular, 
under an idempotent completeness condition,
an adjoint pair $(F, G)$ with $G$ separable is always monadic.

  \begin{cor}\label{cor:sepfm}
  Keep the notation as above. Assume further that $\mathcal{D}$ is idempotent complete. Then the functor $G$ is separable if and only if  $M$ is a separable monad and $K\colon \mathcal{D}\rightarrow M\mbox{-{\rm Mod}}_\mathcal{C}$ is an equivalence. \hfill $\square$
  \end{cor}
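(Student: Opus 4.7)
The plan is simply to combine Proposition \ref{prop:main} with Lemma \ref{lem:idem}(3). Proposition \ref{prop:main} has already established the biconditional ``$G$ is separable $\Leftrightarrow$ $M$ is a separable monad and $K$ is an equivalence up to retracts'', so the only additional work under the extra hypothesis of Corollary \ref{cor:sepfm} is to upgrade the phrase ``equivalence up to retracts'' to ``equivalence'' for the comparison functor $K$.

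To carry this out, I would apply Lemma \ref{lem:idem}(3) to the functor $F = K\colon \mathcal{D}\to M\mbox{-{\rm Mod}}_\mathcal{C}$. The hypothesis of Lemma \ref{lem:idem}(3) requires that the source category of the functor be idempotent complete; here the source is $\mathcal{D}$, which is idempotent complete by assumption. Hence the lemma yields the equivalence of conditions ``$K$ is an equivalence'' $\Leftrightarrow$ ``$K$ is an equivalence up to retracts''. Substituting this into the statement of Proposition \ref{prop:main} gives the desired biconditional in both directions, proving the corollary.

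There is no real obstacle in the argument; the only thing worth double-checking is that the idempotent completeness hypothesis is being imposed on the correct side of $K$, namely its domain $\mathcal{D}$, which matches exactly the form of Lemma \ref{lem:idem}(3). No assumption on $M\mbox{-{\rm Mod}}_\mathcal{C}$ is needed a priori: once $K$ is an equivalence up to retracts with source $\mathcal{D}$ idempotent complete, the target $M\mbox{-{\rm Mod}}_\mathcal{C}$ is automatically idempotent complete as well.
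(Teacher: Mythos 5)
Your proposal is correct and is exactly the paper's argument: the author states the corollary as an immediate consequence of Proposition \ref{prop:main} together with Lemma \ref{lem:idem}(3), applied to $K$ whose source $\mathcal{D}$ is the idempotent complete category, precisely as you do. Your closing observation that the target $M\mbox{-{\rm Mod}}_\mathcal{C}$ is then automatically idempotent complete is a correct (if unneeded) bonus.
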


\section{Applications to triangle equivalences}

In this section, we apply the equivalence in Corollary \ref{cor:sepfm} to obtain two triangle equivalences. In particular, we obtain
a comparison result between the derived category of the category of equivariant objects and the category of equivariant objects in
the derived category; see Proposition \ref{prop:app2}.

Let $\mathcal{A}$ be an abelian category. A monad $(M, \eta, \mu)$ on $\mathcal{A}$ is \emph{exact}
if the endofunctor $M\colon \mathcal{A}\rightarrow \mathcal{A}$ is exact, in particular, it is additive.
In this case, the category $M\mbox{-Mod}_\mathcal{A}$ of $M$-modules is abelian; indeed, a
sequence of $M$-modules is exact if and only if the sequence of the underlying objects is exact.
It follows that both functors $F_M\colon \mathcal{A}\rightarrow M\mbox{-Mod}_\mathcal{A}$ and
$G_M\colon M\mbox{-Mod}_\mathcal{A}\rightarrow \mathcal{A}$ are exact.

We consider the bounded derived category $\mathbf{D}^b(\mathcal{A})$. The exact monad  $(M, \eta, \mu)$
extends to a monad on the derived category, that is, the monad acts on complexes componentwise.
The resulting monad on $\mathbf{D}^b(\mathcal{A})$ is still denoted by $M$.  Since both the functors
 $F_M$ and $G_M$ are exact,  they extends to triangle functors
 $\mathbf{D}^b(F_M)\colon \mathbf{D}^b(\mathcal{A})\rightarrow \mathbf{D}^b(M\mbox{-Mod}_\mathcal{A})$
 and $\mathbf{D}^b(G_M)\colon \mathbf{D}^b(M\mbox{-Mod}_\mathcal{A})\rightarrow \mathbf{D}^b(\mathcal{A})$.
 They still form an adjoint pair, which defines the monad $M$ on $\mathbf{D}^b(\mathcal{A})$. Therefore,
 we have the following comparison functor associated to the adjoint pair $(\mathbf{D}^b(F_M), \mathbf{D}^b(G_M))$
\begin{align*}
K\colon \mathbf{D}^b(M\mbox{-Mod}_\mathcal{A})\longrightarrow M\mbox{-Mod}_{\mathbf{D}^b(\mathcal{A})}.
\end{align*}
The functor $K$ sends a complex  $\cdots \rightarrow (X^n, \lambda^n)\rightarrow (X^{n+1},
\lambda^{n+1})\rightarrow\cdots $ to $(X^\bullet, \lambda^\bullet)$, a module of the monad  $M$ on $\mathbf{D}^b(\mathcal{A})$.

Assume that the given exact monad $M$ on $\mathcal{A}$ is separable. Then the corresponding
monad $M$ on $\mathbf{D}^b(\mathcal{A})$ is also separable. Indeed, the section $\sigma$ for $\mu$
extends to the corresponding section on $\mathbf{D}^b(\mathcal{A})$ componentwise; moreover, it follows
that the monad $M$ on $\mathbf{D}^b(\mathcal{A})$ is stably separable in the sense of \cite[Definition 3.5]{Bal}.
 Recall that the bounded derived category $\mathbf{D}^b(\mathcal{A})$ is idempotent complete; see \cite[Corollary 2.10]{BS}.
 Then we apply \cite[Corollary 4.3]{Bal} to obtain that the category $M\mbox{-Mod}_{\mathbf{D}^b(\mathcal{A})}$
  carries a canonical pre-triangulated structure, that is, a triangulated structure possibly without
  the octahedral axiom; indeed, a triangle in  $M\mbox{-Mod}_{\mathbf{D}^b(\mathcal{A})}$ is exact if and
  only if the corresponding triangle of the underlying objects in $\mathbf{D}^b(\mathcal{A})$ is exact.
   It follows that the comparison functor $K$ is a triangle functor.

We obtain the following triangle equivalence for separable exact monads, which is analogous to \cite[Theorem 6.5]{Bal}.

\begin{prop}\label{prop:app1}
Let $M$ be a monad on an abelian category $\mathcal{A}$, which is exact and separable.
Then the comparison functor $K\colon \mathbf{D}^b(M\mbox{-{\rm Mod}}_\mathcal{A})\rightarrow M\mbox{-{\rm Mod}}_{\mathbf{D}^b(\mathcal{A})}$  is a triangle equivalence.
\end{prop}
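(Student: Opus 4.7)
The plan is to deduce the proposition by applying Corollary \ref{cor:sepfm} to the induced adjoint pair $(\mathbf{D}^b(F_M), \mathbf{D}^b(G_M))$ on $\mathbf{D}^b(\mathcal{A})$ and $\mathbf{D}^b(M\mbox{-Mod}_\mathcal{A})$, and then upgrading the resulting categorical equivalence to a triangle equivalence by invoking the pre-triangulated structure already in place.

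First, I would check the hypotheses needed to invoke Corollary \ref{cor:sepfm} with $F=\mathbf{D}^b(F_M)$ and $G=\mathbf{D}^b(G_M)$. The target $\mathbf{D}^b(M\mbox{-Mod}_\mathcal{A})$ is idempotent complete by \cite[Corollary 2.10]{BS}, since $M\mbox{-Mod}_\mathcal{A}$ is abelian (as $M$ is exact). Moreover, as already noted in the paragraph preceding the statement, the separating section $\sigma\colon M\to M^2$ on $\mathcal{A}$ is a natural transformation between additive endofunctors and hence extends componentwise to a section of the multiplication of the induced monad on $\mathbf{D}^b(\mathcal{A})$; so the induced monad is separable.

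The key remaining input is the separability of $\mathbf{D}^b(G_M)$. By Lemma \ref{lem:sepmon2} the forgetful functor $G_M\colon M\mbox{-Mod}_\mathcal{A}\to\mathcal{A}$ is separable, and by Lemma \ref{lem:adjoint}(2) this is witnessed by a natural transformation $\xi\colon {\rm Id}_{M\mbox{-}{\rm Mod}_\mathcal{A}}\to F_MG_M$ satisfying $\epsilon_M\circ\xi={\rm Id}$ (explicitly $\xi_{(X,\lambda)}=M(\lambda)\circ\sigma_X\circ\eta_X$). Since $F_M$, $G_M$, $\eta$, $\epsilon_M$, $\sigma$ are all additive/natural, $\xi$ extends componentwise to a natural transformation $\mathbf{D}^b(\xi)\colon {\rm Id}\to \mathbf{D}^b(F_M)\mathbf{D}^b(G_M)$ satisfying the analogous splitting identity on the derived level. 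A second application of Lemma \ref{lem:adjoint} then yields that $\mathbf{D}^b(G_M)$ is separable, and Corollary \ref{cor:sepfm} delivers that $K$ is an equivalence of categories.

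To promote this to a triangle equivalence, I would use that both $\mathbf{D}^b(M\mbox{-Mod}_\mathcal{A})$ and $M\mbox{-Mod}_{\mathbf{D}^b(\mathcal{A})}$ are (pre-)triangulated in a way that the forgetful functors to $\mathbf{D}^b(\mathcal{A})$ detect exact triangles: on the source by definition of the derived category, and on the target by \cite[Corollary 4.3]{Bal}. Since $K$ satisfies $G_M\circ K=\mathbf{D}^b(G_M)$ and evidently commutes with the shift, it preserves and reflects exact triangles, so it is a triangle functor; combined with being an equivalence, it is a triangle equivalence. The only mild obstacle I anticipate is the careful verification that the componentwise extensions of $\sigma$ and $\xi$ really are natural on $\mathbf{D}^b$ (as opposed to merely on the homotopy category), but this is automatic because they are defined termwise from natural transformations of additive functors and hence descend through the usual localization.
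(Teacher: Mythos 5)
Your proposal is correct and follows essentially the same route as the paper: establish that $\mathbf{D}^b(G_M)$ is separable by lifting the section of the counit $\epsilon_M$ (obtained from Lemmas \ref{lem:sepmon2} and \ref{lem:adjoint}) to the derived level, note that $\mathbf{D}^b(M\mbox{-Mod}_\mathcal{A})$ is idempotent complete by \cite[Corollary 2.10]{BS}, and conclude via Corollary \ref{cor:sepfm}, with the triangle-functor property of $K$ coming from the canonical pre-triangulated structure of \cite[Corollary 4.3]{Bal} as in the paragraph preceding the statement. The only cosmetic difference is that you treat the separability of the induced monad on $\mathbf{D}^b(\mathcal{A})$ as a hypothesis to verify, whereas in Corollary \ref{cor:sepfm} it is part of the conclusion once $\mathbf{D}^b(G_M)$ is known to be separable (though it is of course needed beforehand to equip $M\mbox{-Mod}_{\mathbf{D}^b(\mathcal{A})}$ with its pre-triangulated structure).
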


\begin{proof}
We already proved that $K$ is a triangle functor. By Lemmas \ref{lem:sepmon2} and \ref{lem:adjoint},
the functor $G_M\colon M\mbox{-Mod}_\mathcal{A}\rightarrow \mathcal{A}$ is separable and thus the
counit $\epsilon_M\colon F_MG_M \rightarrow {\rm Id}_{M\mbox{-}{\rm Mod}_\mathcal{A}}$ has a section.
The counit of the adjoin pair $(\mathbf{D}^b(F_M), \mathbf{D}^b(G_M)))$ is induced by $\epsilon_M$,
and thus also has a section. Then Lemma \ref{lem:adjoint} yields that the functor
$\mathbf{D}^b(G_M)\colon \mathbf{D}^b(M\mbox{-Mod}_\mathcal{A})\rightarrow \mathbf{D}^b(\mathcal{A})$ is
separable. By \cite[Corollary 2.10]{BS} the bounded derived category  $\mathbf{D}^b(M\mbox{-Mod}_\mathcal{A})$ is
idempotent complete. Then it follows from Corollary \ref{cor:sepfm} that the comparison functor $K$ is an equivalence.
\end{proof}

\begin{rem}
We mention that if the unbounded derived categories involved are both idempotent complete, then
the same result holds for unbounded derived categories.
\end{rem}

In what follows, we apply Proposition \ref{prop:app1} to obtain a more concrete triangle equivalence.

 We assume temporarily that $\mathcal{A}$ is an arbitrary category. Let $G$ be a finite g
 roup, which is written multiplicatively and whose unit is denoted by $e$. We assume that there is a \emph{strict}
 action of $G$ on $\mathcal{A}$, that is, there is a group homomorphism from $G$ to the automorphism
 group of $\mathcal{A}$. For $g\in G$ and a morphism $\theta\colon X\rightarrow Y$ in $\mathcal{A}$,
 the action by $g$ is denoted by $^g\theta\colon {^g X}\rightarrow {^g Y}$.  A \emph{$G$-equivariant object}
 in $\mathcal{A}$ is a pair $(X, \alpha)$, where $X$ is an object in $\mathcal{A}$ and $\alpha$ assigns for each
 $g\in G$ an isomorphism $\alpha_g\colon X\rightarrow {^gX}$ subject to the relations
 $^g(\alpha_{g'})\circ \alpha_g=\alpha_{gg'}$. A morphism $\theta\colon (X, \alpha)\rightarrow (Y, \beta)$ of
  two $G$-equivariant objects is a morphism $\theta\colon X\rightarrow Y$ such that
  $\beta_g\circ \theta={^g\theta}\circ \alpha_g$ for all $g\in G$. This gives rise to
   the category $\mathcal{A}^G$ of $G$-equivariant objects, and the forgetful functor
   $U\colon \mathcal{A}^G\rightarrow \mathcal{A}$ defined by $U(X, \alpha)=X$. For details,
   we refer to \cite{RR,DGNO}.

 Let $\mathcal{A}$ be an additive category. Then the forgetful functor $U$
 admits a left adjoint $F\colon \mathcal{A}\rightarrow \mathcal{A}^G$ which is defined as follows:
  for an object $X$, set $F(X)=(\oplus_{h\in G} {^hX}, {\rm Id})$, where
  ${\rm Id}_g\colon \oplus_{h\in G} {^hX}\rightarrow {^g(\oplus_{h\in G} {^hX})}$ is the identity map for
  any $g\in G$; the functor $F$ sends a morphism $\theta\colon X\rightarrow Y$ to $\oplus_{h\in G} {^h\theta}$.
  For an object $X$ in $\mathcal{A}$ and an object $(Y, \beta)$ in $\mathcal{A}^G$,
  a morphism $F(X)\rightarrow (Y, \beta)$ is of the form $\sum_{h\in G}\theta_h\colon \oplus_{h\in G} {^h X}\rightarrow Y$
  satisfying $^g(\theta_h)=\beta_g\circ \theta_{gh}$ for any $g, h\in G$.  The adjunction of $(F, U)$ is given by the
  following natural isomorphism
\begin{align*}
 {\rm Hom}_{\mathcal{A}^G} (F(X), (Y, \beta))\longrightarrow {\rm Hom}_\mathcal{A}(X, U(Y, \beta))
 \end{align*}
 sending $\sum_{h\in G} \theta_h$ to $\theta_e$. The corresponding unit $\eta\colon {\rm Id}_\mathcal{A}\rightarrow UF$ is
 given such that $\eta_X=({\rm Id}_X, 0, \cdots, 0)^t$, where `t' denotes the transpose;
 the counit $\epsilon\colon FU\rightarrow {\rm Id}_{\mathcal{A}^G}$ is given such that
  $\epsilon_{(Y, \beta)}=\sum_{h\in G} \beta_h^{-1}$.

The following fact seems  well known.

 \begin{lem}\label{lem:monadic}
 Let $\mathcal{A}$ be an additive category and $G$ be a finite group acting on $\mathcal{A}$ strictly.
 Then the adjoint pair $(F,U; \eta, \epsilon)$ is strictly monadic.
 \end{lem}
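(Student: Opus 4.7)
The plan is to construct an explicit two-sided inverse to the comparison functor $K\colon \mathcal{A}^G\to M\mbox{-Mod}_\mathcal{A}$, where $M=UF$ sends $X$ to $\bigoplus_{h\in G}{^hX}$; this will show that $K$ is an isomorphism of categories and hence that $(F,U)$ is strictly monadic. I first record the form of $K$ from its general definition together with the explicit formula for $\epsilon$: for any $(Y,\beta)\in \mathcal{A}^G$, one has $K(Y,\beta)=(Y,U\epsilon_{(Y,\beta)})$, whose structure map $\sum_{h\in G}\beta_h^{-1}\colon \bigoplus_{h\in G}{^hY}\to Y$ decomposes along the direct-sum indexing into components $\beta_h^{-1}\colon {^hY}\to Y$.

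Next I would analyze an arbitrary $M$-module $(X,\lambda)$. Since $M(X)$ is a $G$-indexed direct sum, the structure map decomposes into components $\lambda_h\colon {^hX}\to X$. The unit axiom pins down $\lambda_e={\rm Id}_X$; the associativity axiom $\lambda\circ M\lambda=\lambda\circ \mu_X$, after unwinding the multiplication $\mu_X=U\epsilon_{F(X)}$ on the free equivariant object $F(X)=(\oplus_k{^kX},{\rm Id})$ and observing that it sends the $(h,k)$-summand ${^{hk}X}$ of $M^2(X)$ to the $hk$-summand of $M(X)$ by the identity, becomes the cocycle-type identity
\begin{align*}
\lambda_{hk}=\lambda_h\circ {^h\lambda_k}\qquad \text{for all } h,k\in G.
\end{align*}
From this identity together with $\lambda_e={\rm Id}_X$, it follows formally (using $k=h^{-1}$ and then acting by ${}^h$ on the analogous identity with $h$ and $h^{-1}$ swapped) that each $\lambda_h$ is invertible with inverse ${^h\lambda_{h^{-1}}}$.

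I then define $L\colon M\mbox{-Mod}_\mathcal{A}\to\mathcal{A}^G$ by $L(X,\lambda)=(X,\alpha)$ with $\alpha_h=\lambda_h^{-1}$, and $L(\theta)=\theta$ on morphisms. The equivariance relation $\alpha_{hk}={^h\alpha_k}\circ \alpha_h$ drops out by inverting the cocycle identity, and the compatibility of an $M$-module morphism $\theta$, which reads componentwise as $\theta\circ\lambda_h=\lambda'_h\circ {^h\theta}$, transforms after inversion into the equivariance condition $\beta_h\circ \theta={^h\theta}\circ \alpha_h$. The identities $KL={\rm Id}$ and $LK={\rm Id}$ are then just bookkeeping from the formulas $\alpha_h=\lambda_h^{-1}$ and $\lambda_h=\beta_h^{-1}$.

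The main obstacle, modulo notation, is the explicit identification of $\mu_X$ on the iterated sum $M^2(X)=\bigoplus_{h,k\in G}{^{hk}X}$: one must carefully track the strict $G$-action on nested direct sums and confirm that $\mu_X$ indeed acts as the group multiplication on the indexing set. Once that identification is in place, every subsequent step reduces to formal manipulation with the components $\lambda_h$, and the inverse functor $L$ is essentially forced by the requirement that $LK$ recover the original equivariance $\beta$.
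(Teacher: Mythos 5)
Your proposal is correct and follows essentially the same route as the paper: compute the monad $M=UF$ explicitly, decompose an $M$-module structure map into components $\lambda_h$, translate the unit and associativity axioms into $\lambda_e={\rm Id}$ and the cocycle identity, and identify $M$-modules with equivariant objects via $\alpha_h=\lambda_h^{-1}$. Your explicit verification that each $\lambda_h$ is invertible (with inverse ${}^h\lambda_{h^{-1}}$) is a detail the paper leaves implicit, but otherwise the two arguments coincide.
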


\begin{proof}
We compute the defined monad $(M=UF, \eta, \mu)$ of the adjoint pair $(F, U)$.
Then $M(X)=\oplus_{h\in G} {^h X}$ and $M(\theta)=\oplus_{h\in G} {^h \theta}$ for a morphism $\theta$ in $\mathcal{A}$.
 The multiplication $\mu$ is given by
 $$\mu_X=U\epsilon_{F(X)}\colon M^2(X)=\oplus_{h, g\in G} {^{hg}X}\rightarrow M(X)=\oplus_{h\in G} {^h X}$$
 with the property that the corresponding component
  $^{hg}X\rightarrow {^{h'}X}$ is $\delta_{hg, h'}{\rm Id}_{(^{h'}X)}$; here, $\delta$ is the Kronecker symbol.

  An $M$-module is a pair $(X, \lambda)$ with $\lambda=\sum_{h\in G}\lambda_h\colon M(X)=\oplus_{h\in G} {^h X}\rightarrow X$.
  The condition $\eta_X\circ \lambda={\rm Id}_X$ is equivalent to $\lambda_e={\rm Id}_X$,
  and $\lambda\circ M(\lambda)=\lambda\circ \mu_X$ is equivalent to
   $\lambda_{hg}=\lambda_h\circ {^h (\lambda_g)}$ for any $h, g\in G$. Hence, if we
   set $\alpha_h\colon X\rightarrow {^hX}$ to be $(\lambda_h)^{-1}$, we obtain an object $(X, \alpha)\in \mathcal{A}^G$.
    Roughly speaking, the morphism $\lambda$ carries the same information with $\alpha$.

  Indeed, the associated comparison functor $K\colon \mathcal{A}^G\rightarrow M\mbox{-Mod}_\mathcal{A}$ sends $(X, \alpha)$ to $(X, \lambda)$ by $\lambda_h=(\alpha_h)^{-1}$. It follows immediately that $K$ induces a bijection on objects, and is fully faithful, thus an isomorphism of categories.
\end{proof}

Let $\mathcal{A}$ be an additive category. A natural number $n$ is said to be \emph{invertible}
 in $\mathcal{A}$ provided that for any morphism $f\colon X\rightarrow Y$ there exists a unique
 morphism $g\colon X\rightarrow Y$ such that $f=ng$. This unique morphism is denoted by $\frac{1}{n}f$.
 For example, if $\mathcal{A}$ is a $k$-linear for a field $k$ and the characteristic
 of $k$ does not divide $n$, then $n$ is invertible in $\mathcal{A}$; see \cite[p.255]{RR}.

The third statement of the following result is an application of the results in \cite{Bal}.

\begin{lem}\label{lem:G-equ}
 Let $\mathcal{A}$ be an additive category and $G$ be a finite group acting on $\mathcal{A}$ strictly.
 Assume that the order $|G|$ of $G$ is invertible in $\mathcal{A}$. Then the following statements hold.
  \begin{enumerate}
  \item The forgetful functor $U\colon \mathcal{A}^G\rightarrow \mathcal{A}$ is separable;
  \item The monad $M=UF$ on $\mathcal{A}$ is separable;
  \item Assume that $\mathcal{A}$ is a pre-triangulated category which is idempotent complete, and
  that the action of $G$ on $\mathcal{A}$ is given by triangle automorphisms. Then $\mathcal{A}^G$ has a
  unique pre-triangulated structure such that the forgetful functor $U$ is a triangle functor.
  \end{enumerate}
\end{lem}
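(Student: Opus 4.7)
The plan is to establish (1) by producing the section demanded by Lemma \ref{lem:adjoint}, to deduce (2) directly from (1), and to obtain (3) by transporting a canonical structure from $M$-modules to $\mathcal{A}^G$ via the isomorphism of Lemma \ref{lem:monadic}.

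For (1), Lemma \ref{lem:adjoint} reduces the separability of $U$ to constructing a natural transformation $\xi\colon {\rm Id}_{\mathcal{A}^G}\to FU$ satisfying $\epsilon\circ\xi={\rm Id}$. Since $\epsilon_{(Y,\beta)}=\sum_{h\in G}\beta_h^{-1}$ and $|G|$ is invertible in $\mathcal{A}$, I would define $\xi_{(Y,\beta)}\colon Y\to\bigoplus_{h\in G}{^hY}$ to have $h$-th component $\frac{1}{|G|}\beta_h$. Then $\epsilon_{(Y,\beta)}\circ\xi_{(Y,\beta)}=\frac{1}{|G|}\sum_{h\in G}{\rm Id}_Y={\rm Id}_Y$. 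It remains to check, using the cocycle identity ${}^{g}(\beta_h)\circ\beta_g=\beta_{gh}$, that $\xi_{(Y,\beta)}$ is indeed a morphism in $\mathcal{A}^G$ into $FU(Y,\beta)=(\bigoplus_{h\in G}{^hY},{\rm Id})$, and that $\xi$ is natural in $(Y,\beta)$; both are direct calculations from the definition of morphisms between equivariant objects. Then (2) is immediate from (1) together with Lemma \ref{lem:sepmon}.

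For (3), I would first observe that the endofunctor $M=UF\cong\bigoplus_{h\in G}{^h(-)}$ is a triangle functor, being a finite direct sum of the triangle automorphisms ${}^h(-)$. The unit, multiplication and the separability datum $\sigma$ supplied by (1) are all assembled componentwise from the action data, so they are natural transformations of triangle functors; hence $M$ is stably separable in the sense of \cite[Definition 3.5]{Bal}. Since $\mathcal{A}$ is idempotent complete, \cite[Corollary 4.3]{Bal} equips $M\mbox{-Mod}_\mathcal{A}$ with the unique pre-triangulated structure making $G_M$ a triangle functor; it is the one declaring a triangle exact iff the underlying triangle in $\mathcal{A}$ is exact. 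Lemma \ref{lem:monadic} provides a strict monadic isomorphism $K\colon \mathcal{A}^G\to M\mbox{-Mod}_\mathcal{A}$ with $G_MK=U$, so transporting the structure along $K^{-1}$ gives $\mathcal{A}^G$ a pre-triangulated structure with $U$ a triangle functor, and uniqueness is inherited from that of the structure on $M\mbox{-Mod}_\mathcal{A}$.

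The main obstacle I expect is in (3). Parts (1) and (2) are essentially routine once the explicit section $\xi$ is written down and the adjunction unpacked. The delicate point in (3) is verifying that the separability datum obtained in (1) is genuinely compatible with the pre-triangulated structure on $\mathcal{A}$ so as to qualify as stably separable and allow invocation of \cite[Corollary 4.3]{Bal}; after this verification, both the existence and the uniqueness of the pre-triangulated structure on $\mathcal{A}^G$ follow cleanly by transport along the isomorphism $K$ of Lemma \ref{lem:monadic}.
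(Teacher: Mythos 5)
Your proposal is correct and follows essentially the same route as the paper: the explicit section $\xi_{(Y,\beta)}$ with components $\frac{1}{|G|}\beta_h$ for (1), Lemma \ref{lem:sepmon} for (2), and the identification of $\mathcal{A}^G$ with $M\mbox{-Mod}_\mathcal{A}$ via Lemma \ref{lem:monadic} combined with stable separability and \cite[Corollary 4.3]{Bal} for (3). The extra care you take in checking that $\xi$ is a morphism of equivariant objects and natural is exactly the (omitted) verification the paper relies on.
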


\begin{proof}
For (1), we apply Lemma \ref{lem:adjoint}, and thus it suffices to
prove that the counit $\epsilon\colon FU\rightarrow {\rm Id}_{\mathcal{A}^G}$ admits a section $\xi$.
We define a natural transformation $\xi\colon {\rm Id}_{\mathcal{A}^G}\rightarrow FU$ such that
 $\xi_{(X, \alpha)}=\frac{1}{|G|}\prod_{h\in G}\alpha_h\colon (X, \alpha)\rightarrow (\oplus_{h\in G} {^h X}, {\rm Id})$.
 It follows that $\epsilon\circ\xi={\rm Id}$. The statement (2) follows from Lemma \ref{lem:sepmon}.

For (3), we identify by Lemma \ref{lem:monadic} that category $\mathcal{A}^G$ with $M\mbox{-Mod}_\mathcal{A}$.
The monad $M$ is a triangle functor and by (2) it is separable; it is indeed stably separable in the sense of
\cite[Definition 3.5]{Bal} by the explicit construction of the section $\xi$ above. Then the statement
follows from \cite[Corollary 4.3]{Bal}.
\end{proof}

Let $\mathcal{A}$ be an abelian category. Assume that there is a strict $G$-action on $\mathcal{A}$. Then
the category $\mathcal{A}^G$ is abelian and the functors $F\colon \mathcal{A}\rightarrow \mathcal{A}^G$ and
 $U\colon \mathcal{A}^G\rightarrow \mathcal{A}$ are both exact. We will consider the bounded derived category $\mathbf{D}^b(\mathcal{A}^G)$. Then the strict action of $G$ on $\mathcal{A}$ extends to $\mathbf{D}^b(\mathcal{A})$.
We have the following  functor
$$K\colon \mathbf{D}^b(\mathcal{A}^G)\longrightarrow \mathbf{D}^b(\mathcal{A})^G,$$
sending a complex $\cdots \rightarrow (X^n, \alpha^n)\rightarrow (X^{n+1}, \alpha^{n+1})\rightarrow \cdots$ in
$\mathcal{A}^G$ to a $G$-equivariant object $(X^\bullet, \alpha^\bullet)$ in $\mathbf{D}^b(\mathcal{A})$.

Assume that $|G|$ is invertible in $\mathcal{A}$, and thus $|G|$ is
invertible in $\mathbf{D}^b(\mathcal{A})$. Then by Lemma \ref{lem:G-equ}(3) the
category $\mathbf{D}^b(\mathcal{A})^G$ has a unique pre-triangulated structure such that the forgetful
functor $U\colon \mathbf{D}^b(\mathcal{A})^G\rightarrow \mathbf{D}^b(\mathcal{A})$ is a triangle functor.
It follows that the above functor $K$ is a triangle functor.

The following comparison  result extends the nice observation in \cite[Lemma 1.1]{Po}, where extra conditions are put
for the triangle equivalence.

\begin{prop}\label{prop:app2}
Let $\mathcal{A}$ be an abelian category and $G$ be a finite group acting on $\mathcal{A}$ strictly.
Assume that the order $|G|$ of $G$ is invertible in $\mathcal{A}$. Then the above
functor $K\colon \mathbf{D}^b(\mathcal{A}^G)\rightarrow \mathbf{D}^b(\mathcal{A})^G$ is a triangle equivalence.
\end{prop}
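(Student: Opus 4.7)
The plan is to reduce the claim to Proposition \ref{prop:app1} by realizing both sides as categories of modules over the same exact separable monad. Consider the monad $M = UF$ on $\mathcal{A}$ with unit and multiplication as computed in the proof of Lemma \ref{lem:monadic}. Since both $F$ and $U$ are exact, the endofunctor $M$ is exact, and the invertibility of $|G|$ in $\mathcal{A}$ yields that $M$ is separable by Lemma \ref{lem:G-equ}(2).

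First, I apply Lemma \ref{lem:monadic} to the abelian category $\mathcal{A}$ to obtain an isomorphism of abelian categories $\mathcal{A}^G \cong M\mbox{-Mod}_\mathcal{A}$, and hence a triangle isomorphism $\mathbf{D}^b(\mathcal{A}^G) \cong \mathbf{D}^b(M\mbox{-Mod}_\mathcal{A})$. Next, I apply Lemma \ref{lem:monadic} to the additive category $\mathbf{D}^b(\mathcal{A})$, equipped with its componentwise strict $G$-action, to get an isomorphism $\mathbf{D}^b(\mathcal{A})^G \cong M\mbox{-Mod}_{\mathbf{D}^b(\mathcal{A})}$, where on the right $M$ is the componentwise extension of the monad to $\mathbf{D}^b(\mathcal{A})$ introduced in Section 4. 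Note that $|G|$ remains invertible in $\mathbf{D}^b(\mathcal{A})$, so Lemma \ref{lem:monadic} applies.

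Under these two identifications, I claim that $K$ is transported to the comparison functor $\mathbf{D}^b(M\mbox{-Mod}_\mathcal{A}) \rightarrow M\mbox{-Mod}_{\mathbf{D}^b(\mathcal{A})}$ of Proposition \ref{prop:app1}. Since $M$ is exact and separable, Proposition \ref{prop:app1} guarantees that this comparison functor is a triangle equivalence, and the conclusion follows.

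The main verification is the identification of $K$ with the comparison functor. I would check it on objects: by the proof of Lemma \ref{lem:monadic}, a complex $(X^\bullet,\alpha^\bullet)$ in $\mathcal{A}^G$ corresponds to the complex of $M$-modules $(X^\bullet,\lambda^\bullet)$ with $\lambda^n_h=(\alpha^n_h)^{-1}$; then the comparison functor sends this to the $M$-module $(X^\bullet,\lambda^\bullet)$ in $\mathbf{D}^b(\mathcal{A})$, which corresponds back under the second application of Lemma \ref{lem:monadic} to $(X^\bullet,\alpha^\bullet)$ viewed in $\mathbf{D}^b(\mathcal{A})^G$. This is exactly $K$. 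A parallel check on morphisms is routine. The subtlest remaining point is matching the two pre-triangulated structures on $\mathbf{D}^b(\mathcal{A})^G$: the one supplied by Lemma \ref{lem:G-equ}(3) and the one used in Proposition \ref{prop:app1}. Both are instances of \cite[Corollary 4.3]{Bal} applied to the stably separable monad $M$ on the idempotent complete category $\mathbf{D}^b(\mathcal{A})$, so they coincide, and $K$ is automatically a triangle functor.
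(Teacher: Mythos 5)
Your proposal is correct and follows essentially the same route as the paper: identify $\mathcal{A}^G$ with $M\mbox{-Mod}_\mathcal{A}$ and $\mathbf{D}^b(\mathcal{A})^G$ with $M\mbox{-Mod}_{\mathbf{D}^b(\mathcal{A})}$ via Lemma \ref{lem:monadic}, then invoke Proposition \ref{prop:app1} for the exact separable monad $M=UF$. The paper's proof is terser; your extra verifications (that $K$ transports to the comparison functor and that the two pre-triangulated structures agree) are sound elaborations of what the paper leaves implicit.
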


\begin{proof}
 We consider the monad $M=UF$ on $\mathcal{A}$;
it is exact and separable. It extends
to a monad $M$ on $\mathbf{D}^b(\mathcal{A})$. By Lemma \ref{lem:monadic} we
identify $\mathcal{A}^G$ with $M\mbox{-Mod}_\mathcal{A}$, $\mathbf{D}^b(\mathcal{A})^G$
with $M\mbox{-Mod}_{\mathbf{D}^b(\mathcal{A})}$. Then the triangle equivalence
follows from Proposition \ref{prop:app1}.
\end{proof}

\begin{rem}
(1) Proposition \ref{prop:app2} might still hold if the action is not strict. Here, we recall that a
(non-strict) action of a finite group $G$ on $\mathcal{A}$ is a monoidal functor from $\underline{G}$ to
the category of endomorphism functors of $\mathcal{A}$; here, $\underline{G}$ denotes the corresponding
monoidal category of $G$. For details, we refer to \cite[Section 4]{DGNO}.  We need to adapt Lemmas
\ref{lem:monadic} and \ref{lem:G-equ} for non-strict actions; compare \cite[Theorem 1.4]{RR}.

(2) The assumption on the invertibility of $|G|$ in Proposition \ref{prop:app2} is necessary. Indeed, if $\mathcal{A}=k\mbox{-Mod}$ is
the category of $k$-modules, where $k$ is a field such that its characteristic divides $|G|$. Take the trivial action of $G$
on $\mathcal{A}$, and thus $\mathcal{A}^G$ is isomorphic to the category $kG\mbox{-Mod}$ of modules over the group
algebra $kG$. The functor  $K\colon \mathbf{D}^b(\mathcal{A}^G)\rightarrow \mathbf{D}^b(\mathcal{A})^G$ is not an equivalence;
indeed,  $\mathbf{D}^b(\mathcal{A}^G)$ is a triangulated category with non-split triangles, but $\mathbf{D}^b(\mathcal{A})^G$
is an abelian category with non-split extensions.
\end{rem}

\vskip 10pt

\noindent {\bf Acknowledgements}\quad The author thanks Jianmin Chen for the reference \cite{Po} and
Guodong Zhou for the reference \cite{Bal}, and thanks Zengqiang Lin for pointing out several misprints.

\bibliography{}

\vskip 10pt

 {\footnotesize \noindent Xiao-Wu Chen \\
 School of Mathematical Sciences,
  University of Science and Technology of
China, Hefei 230026, Anhui, PR China \\
Wu Wen-Tsun Key Laboratory of Mathematics, USTC, Chinese Academy of Sciences, Hefei 230026, Anhui, PR China.\\
URL: http://home.ustc.edu.cn/$^\sim$xwchen.}

\end{document}